\documentclass[a4paper,12pt,reqno]{amsart}
\usepackage[T1]{fontenc}
\usepackage[utf8]{inputenc}
\usepackage{anyfontsize}
\usepackage[english]{babel}
\usepackage{stmaryrd}
\usepackage{amsthm,amsfonts,amsmath,amssymb}
\usepackage{verbatim}

\usepackage[%
	left=3.5cm,       
	right=3.5cm,      
	top=3.5cm,        
	bottom=3.5cm,     
	heightrounded,    
	bindingoffset=0mm 
]{geometry}

\usepackage[dvipsnames]{xcolor}
\usepackage{graphicx}

\usepackage{xfrac}
\usepackage{functan}
\usepackage{mathtools}
\usepackage{mathrsfs}

\numberwithin{equation}{section}
\allowdisplaybreaks

\newcommand{\R}{\mathbb{R}}
\newcommand{\Rn}{\mathbb{R}^N}

\renewcommand{\H}{\mathcal{H}}

\newcommand{\pa}{{\partial}}
\newcommand{\di}{\,\mathrm{d}}

\newcommand{\Om}{\Omega}

\DeclareMathOperator{\supp}{supp}

\DeclareMathOperator{\Div}{div}
\renewcommand{\div}{\Div}
\DeclareMathOperator{\Per}{Per}

\usepackage[textwidth=2.5cm]{todonotes}

\usepackage{enumitem}

\usepackage[
colorlinks=true,
urlcolor=teal,
citecolor=magenta,
linkcolor=teal,
linktocpage,
pdfpagelabels,
bookmarksnumbered,
bookmarksopen,
breaklinks=true]
{hyperref}

\usepackage{thmtools}

\usepackage[capitalize,nameinlink,noabbrev]
{cleveref}

\theoremstyle{plain}
	\newtheorem{theorem}{Theorem}[section]
	\newtheorem{lemma}[theorem]{Lemma}
	\newtheorem{proposition}[theorem]{Proposition}
	\newtheorem{corollary}[theorem]{Corollary}

\theoremstyle{definition}
	\newtheorem{definition}[theorem]{Definition}
	
	\newtheorem{remark}[theorem]{Remark}
    
\title[Spectral inequalities for weighted $p$-Laplacians]{Spectral inequalities for weighted $p$-Laplacians via Talenti symmetrization}%

\author[G.~Bartoli]{Giulio Bartoli}
\address[G.~Bartoli]{
Institut f\"ur Mathematik, Goethe-Universit\"at Frankfurt, \linebreak Robert-Mayer-Str.~10, 60325 Frankfurt am Main, Germany  
}
\email{bartoli@mathematik.uni-frankfurt.de}

\author[G.~Saracco]{Giorgio Saracco}
\address[G.~Saracco]{
Dipartimento di Matematica e Informatica, Universit\`a di Ferrara, via Machiavelli 30, 44121 Ferrara (FE), Italy
}
\email{giorgio.saracco@unife.it}

\keywords{Faber--Krahn inequality, Saint-Venant inequality, isoperimetric inequality, Talenti symmetrization, weighted $p$-Laplacian, Witten Laplacian}

\subjclass[2020]{Primary 35P15. Secondary 35J92, 49Q10}
\begin{document}


\begin{abstract}
We consider the weighted $p$-Laplacian associated with a measure $\mu$ that is absolutely continuous with respect to the Lebesgue measure on an open connected subset $X\subset\mathbb{R}^N$. 
We prove that Talenti's weighted P\'olya--Szeg\H{o} inequality---originally established for Lipschitz functions on $X$---extends to Sobolev functions with zero boundary trace on arbitrary Borel subsets $\Omega\subset X$. 
This yields Faber--Krahn-type inequalities for the first $(p,q)$-eigenvalue the weighted Dirichlet $p$-Laplacian. 
We present several examples fitting this abstract framework, including classical Euclidean and Gaussian cases alongside new results for homogeneous weights in convex cones, anisotropic Gaussians, and log-concave Gaussian perturbations.%
\end{abstract}

\maketitle


\section{Introduction}

Let $\mathcal{L}^N$ be the standard $N$-dimensional Lebesgue measure on $\mathbb{R}^N$ and let $X\subset\mathbb{R}^N$ be an open and connected set. Given $W:X\to\mathbb{R}$, a continuous function, we consider the measure
\begin{equation}
\label{eq:measures}
\mu = e^{W(x)}\mathcal{L}^N.
\end{equation}
This measure is absolutely continuous with respect to the Lebesgue measure and its Radon--Nikod\'ym derivative is $e^{W}$, which we shall call the \emph{density} or \emph{weight}, while we shall refer to $W$ as the \emph{potential}. Measures of this kind are Borel and encompass, for instance, log-convex and log-concave measures that arise in connection with Witten Laplacians and Gaussian-type diffusions.

For a Borel set $\Omega\subset X$ and $p\in(1,+\infty)$, we define the space $L^p_\mu(\Omega)$ as the collection of all $\mu$-measurable functions $u$ on $\Omega$ such that
\[
\|u\|_{p,\mu,\Omega}^p = \int_\Omega |u(x)|^p \,\mathrm{d}\mu < +\infty.
\]
This space, equipped with the norm $\|\,\cdot\,\|_{p,\mu,\Omega}$ is a Banach space.
The weighted Sobolev space $W^{1,p}_{\mu,0}(\Omega)$ is defined as the closure of $C^\infty_c(\Omega)$ with respect to the norm
\begin{equation}
\label{eq:norm}
\|u\|_{1,p,\mu,\Omega}
= 
\bigl(\|u\|_{p,\mu,\Omega}^p + \|\nabla u\|_{p,\mu,\Omega}^p \bigr)^{\frac1p}
= 
\Bigl(\|u\|_{p,\mu,\Omega}^p + \sum_{i=1}^N \|\partial_i u\|^p_{p,\mu,\Omega} \Bigr)^{\frac1p}.
\end{equation}
Local boundedness of the weight ensures that these spaces have the usual functional-analytic properties (see, \emph{e.g.}, \cite{Bre10book,KO84}).

The \emph{first Dirichlet eigenvalue} of the weighted $p$-Laplacian on $\Omega$ is defined variationally by
\begin{equation*}
\lambda_{1,p}^\mu(\Omega)
=
\inf
\left\{
\frac{\|\nabla u\|^p_{p,\mu,\Omega}}{\|u\|^p_{p,\mu,\Omega}} \,:\, u\in W^{1,p}_{\mu,0}(\Omega),\, u \neq 0
\right\}.
\end{equation*}
This is naturally associated with the principal eigenvalue problem 
\[
-\Delta_{p,\mu} u = \lambda\,|u|^{p-2}u \quad\text{in }\Omega,\qquad u=0\quad\text{on }\partial\Omega,
\]
where the weighted (or Witten-type) $p$-Laplacian of $u$, $-\Delta_{p,\mu}(u)$, is given by
\[
-\Delta_{p,\mu}(u) = -\div(|\nabla u|^{p-2}\nabla u) - |\nabla u|^{p-2}\langle \nabla W, \nabla u\rangle,
\]
that is, the usual $p$-Laplacian paired with a drift term, refer, \emph{e.g.}, to~\cite{DMWX21}.

A classical problem in shape optimization is identifying sets $\Omega^*\subset X$ such that the \emph{Faber--Krahn inequality}
\begin{equation}
\label{eq:fk-ineq}
\lambda_{1,p}^\mu(\Omega^*) \le \lambda_{1,p}^\mu(\Omega), \qquad \forall \Omega\subset X \text{ with } \mu(\Omega)=\mu(\Omega^*),
\end{equation}
holds. 

A similar shape optimization problem revolves around the \emph{$p$-torsional rigidity} of the weighted $p$-Laplacian on $\Omega$, variationally defined as
\begin{equation*}
T_{p}^\mu(\Omega)
=
\sup
\left\{
\frac{\|u\|^p_{1,\mu,\Omega}}{\|\nabla u\|^p_{p,\mu,\Omega}} \,:\, u\in W^{1,p}_{\mu,0}(\Omega),\, \nabla u \neq 0
\right\},
\end{equation*}
First, we remark that different conventions exist: in place of the $p$-th powers of the norms, some take the $p/(p-1)$-th power, equivalently, the $p'$-th power, being $p'$ the conjugate exponent of $p$. 
Second, usually one requires $u\neq 0$ and not $\nabla u \neq 0$. In the classic Euclidean space endowed with the Lebesgue measure, requiring either $u\neq 0$ or $\nabla u\neq 0$ is the same for functions in $W^{1,p}_{\mu,0}(\Omega)$.
Here, working with a general measure, it might happen that nonzero constant functions belong to $W^{1,p}_{\mu,0}(\Omega)$, and for these the ratio within the supremum would be ill-posed. This for instance occurs when $\Omega=\mathbb{R}^N$ and $\mu$ is the Gaussian measure.
Finally, we also recall that the $p$-torsional rigidity is naturally associated with the PDE
\[
-\Delta_{p,\mu} u = 1 \quad\text{in }\Omega,\qquad u=0\quad\text{on }\partial\Omega,
\]

The $p$-torsional rigidity analog of the Faber--Krahn inequality~\eqref{eq:fk-ineq} is the  \emph{Saint-Venant inequality}, and one looks for sets $\Omega^*\subset X$, if they exist,   such that 
\begin{equation}
\label{eq:sv-ineq}
T_{p}^\mu(\Omega^*) \ge T_{p}^\mu(\Omega), \qquad \forall \Omega\subset X \text{ with } \mu(\Omega)=\mu(\Omega^*),
\end{equation}
holds. 

Finally, we mention that these two inequalities are just particular instances of Faber--Krahn-type inequalities for the first $(p,q)$-eigenvalue of the weighted $p$-Laplacian, refer to \cref{rem:pq-eingevalue}.

In the Euclidean setting, corresponding to $X=\mathbb{R}^N$ and $W\equiv c\in\mathbb{R}$, inequality~\eqref{eq:fk-ineq} was proved to hold true independently by Faber~\cite{Fab23} and Krahn~\cite{Kra25}, whereas inequality~\eqref{eq:sv-ineq} by P\'olya~\cite{Pol48}. In both cases, $\Omega^*=B$, a Euclidean ball. 
The proof relies on the \emph{Schwarz symmetrization}~\cite{HL30,Sch90book}, which entails the (Euclidean) \emph{P\'olya--Szeg\H{o} principle}~\cite{PS51book}. Roughly speaking, the symmetrization allows to transform a Sobolev function $u$ on $\Omega$ into a Sobolev function $u^\sharp$ defined on a different set, $\Omega^\sharp$, decreasing the Sobolev seminorm $\|\nabla u\|_{p, \mathcal{L}^N,\Omega}$, while keeping  the $L^p$ norm $\|u\|_{p, \mathcal{L}^N,\Omega}$ unchanged. From this principle, both the Faber--Krahn~\eqref{eq:fk-ineq} and Saint-Venant~\eqref{eq:sv-ineq} inequalities  follow at once.

In the Gaussian setting, corresponding to $X=\mathbb{R}^N$ and $W(x)=-\|x\|^2/2$, the Faber--Krahn inequality~\eqref{eq:fk-ineq} has been proved~\cite{Ehr84} via a different symmetrization, known as \emph{Ehrhard symmetrization}~\cite{Ehr83}. The same symmetrization allows to prove the Saint-Venant inequality~\eqref{eq:sv-ineq} in this setting~\cite[Sect.~5]{Liv24}.
More recently, similar strategies based on rearrangements tailored to the underlying isoperimetric sets have been successfully used to derive the Faber--Krahn inequality~\eqref{eq:fk-ineq} for radially log-convex weights in metric measure spaces~\cite{CM25,NV25}.

In this note we revisit the Faber--Krahn and Saint-Venant inequalities in the weighted setting of measures of the form \eqref{eq:measures} by exploiting an abstract symmetrization introduced by Talenti in~\cite{Tal97}. 
Our first goal is to show that Talenti's weighted P\'olya--Szeg\H{o} principle~\cite[Main Thm.]{Tal97}, originally formulated for Lipschitz functions on the whole of $X$, extends to Sobolev functions with zero trace on arbitrary Borel subsets $\Omega\subset X$. 
This extension, obtained via a density argument in weighted Sobolev spaces, immediately yields as corollaries the corresponding Faber--Krahn and Saint-Venant inequalities for the weighted $p$-Laplacian associated with $\mu$. 
The advantage of Talenti's approach is that the symmetrization is specified abstractly by a function $f$ whose superlevel sets saturate a suitable weighted relative isoperimetric inequality and whose lines of steepest descent are straight. 
Once such an $f$ is available, the P\'olya--Szeg\H{o} principle, the Faber--Krahn and Saint-Venant inequalities follow in a unified way, without having to construct an ad hoc rearrangement for each measure.

In \cref{sec:symmetrization}, we recall Talenti symmetrization in the present setting, introduce the associated symmetrized sets, and prove some basic properties. 
In \cref{sec:polya-szego}, we state Talenti's weighted P\'olya--Szeg\H{o} inequality and extend it from Lipschitz to Sobolev functions with zero trace, thereby deriving the corresponding Faber--Krahn inequality~\eqref{eq:fk-ineq} and Saint-Venant inequality~\eqref{eq:sv-ineq}. 
In \cref{sec:examples}, we exhibit several classes of measures for which the hypotheses of Talenti's theorem are satisfied. We recover the Faber--Krahn and Saint-Venant inequalities in the following frameworks: the classical Euclidean; the classical Gaussian. The Faber--Krahn inequality is also recovered in the setting of radial log-convex measures, whereas the Saint-Venant's one appears to be new. Further, we prove them in new settings: suitable homogeneous weights supported in convex cones; anisotropic Gaussian measures; and concave one-dimensional perturbations of the Gaussian measure. In all these settings, the more general Faber--Krahn-type inequalities for the first $(p,q)$-eigenvalue of the weighted $p$-Laplacian holds, see \cref{rem:pq-eingevalue}.
Finally, in \cref{sec:future}, we list future directions of research.

\subsection*{Acknowledgments} 

The present note stems from G.B.'s Master's Thesis. It has been written up while G.S.\ visited G.B.\ at the Goethe-Universität Frankfurt, whose hospitality is kindly acknowledged. Both authors are members of INdAM--GNAMPA. G.B.\ is supported by the Deutsche Forschungsgemeinschaft (DFG, German Research Foundation), Project No.~555837013.

\section{Talenti symmetrization}
\label{sec:symmetrization}

In this section we recall the symmetrization defined in~\cite{Tal97}. We let $X$ be an open and connected subset of $\mathbb{R}^N$ and $\mu$ a measure on a $\sigma$-algebra of subsets of $X$, which is absolutely continuous with respect to the Lebesgue measure $\mathcal{L}^N$.

\begin{definition}[$\mu$-distribution function]
Given a real-valued $\mu$-measurable function $v:X \to \R$ such that the superlevel sets $\{\,|v(x)|>t\,\}$ have finite $\mu$-measure for all $t>0$, the \emph{$\mu$-distribution function} of $v$, denoted by $\Phi_v^\mu$, is the map that associates to any $t>0$ the $\mu$-measure of the superlevel set $\{\,|v|>t\,\}$, \emph{i.e.}, the map from $[0,+\infty)$ into $[0,+\infty)$ defined by 
\begin{equation}
\label{eq:distribution-function}
\Phi^\mu_v(t)=\mu\bigl(\{\,x \in X:|v(x)|>t\,\}\bigr).
\end{equation}
\end{definition}

If no confusion can arise, we will drop the superscript $\mu$ and denote it simply by $\Phi_v$ and refer to it simply as the distribution function.
It is clear that $\Phi_v$: decreases monotonically; is right continuous\footnote{If $v$ is continuous, it improves to continuity.}. 
Moreover, one has
\[
\Phi_v(0)=\mu\bigl(\{\,x \in X:v(x)\neq0\,\}\bigr)
\]
and 
\[
\{\,t\geq0:\Phi_v(t)=0\,\}=[\|v\|_{\infty},+\infty),
\]
where the essential supremum is with respect to the measure $\mu$.

\begin{definition}[$\mu$-decreasing rearrangement]
\label{def:d-rearrangement}
Given a real-valued $\mu$-mea\-sur\-able function $v:X \to \R$ such that the superlevel sets $\{\,|v(x)|>t\,\}$ have finite $\mu$-measure for all $t>0$, we define the \emph{$\mu$-decreasing rearrangement} of $v$ by
\begin{equation}
\label{eq:decreasing-rearrangement}
v^{\mu,*}(s)=\inf\{\,t \geq 0 : \Phi_v^\mu(t)\leq s\,\}.
\end{equation}
\end{definition}

As for the distribution function, if no confusion can arise, we will drop the superscript $\mu$ and denote the $\mu$-decreasing rearrangement of $v$ with $v^*$ and refer to it simply as the decreasing rearrangement.

The decreasing rearrangement $v^*$ can also be interpreted as the distribution function of $\Phi_v$ with respect to the Lebesgue measure. Hence, the function $v^*$ is a decreasing, right-continuous map from $[0,+\infty)$ into $[0,+\infty)$, satisfying $v^*(0)=\|v\|_\infty$ and
\[
\{\, s\geq 0 : v^*(s)=0\,\}=\bigl[\,\mu\bigl(\{\,x \in X: v(x)\neq0\,\}\bigr),+\infty\bigr).
\]
Moreover, for every $y>0$ we have
\begin{equation}
\label{eq:superlevelsets-v*}
\{\, s\geq 0 : v^*(s)>y\,\}=[\,0,\Phi_v(y)).
\end{equation}

\begin{definition}[$(\mu,f)$-Talenti symmetrization] \label{def:talenti}
Let $f:X\to[0,1]$ be a smooth function such that
\begin{equation}
\label{eq:hp-f-talenti}
\tag{H1}
\inf f=0,
\quad 
\sup f=1, 
\quad
\text{and }
\nabla f(x)\neq 0,
\text{ when $0<f(x)<1$}.
\end{equation}
Assume further that
\begin{equation}
\label{eq:hp-f-mu-talenti}
\tag{H2}
\mu\bigl(\{x\in X\,:\, f(x)>t\}\bigr) < +\infty,
\text{ if $t\in(0,1)$.}
\end{equation}
Given a Borel subset $\Omega\subset X$ and a $\mu$-measurable function $u:\Omega \to \R$ such that the superlevel sets $\{\,|u(x)|>t\,\}$ have finite $\mu$-measure for all $t>0$, the \emph{$(\mu,f)$-Talenti symmetrization} of $u$ is the function $u^{\mu,\sharp}:X\to[0,+\infty)$ given by
\begin{equation}
\label{eq:def-symmetrized}
u^{\mu,\sharp}(x)=\tilde{u}^{\mu,*}(\Phi^\mu_f(f(x))),
\end{equation}
where $\tilde{u}$ is the extension to zero of $u$ to $X\setminus\Omega$.
\end{definition}

If no confusion can arise, we will drop the superscript $\mu$ and denote it simply by $u^\sharp$. For the sake of ease, we will say Talenti symmetrization, without expliciting the dependence on the measure $\mu$ and on the function $f$.

\begin{definition}[Symmetrization of a set]
Let $\mu$, $f$, and $\Omega$ be as in \cref{def:talenti}. We define its Talenti symmetrized as
\(
\Omega^\sharp = \{x\in X\,:\, \chi_\Omega^\sharp(x)>0 \}.
\)
\end{definition}

We now prove the following lemma, which characterizes the symmetrized set $\Omega^\sharp$ as the union of the zero-superlevel sets of the Talenti symmetrized of all $\mu$-measurable functions defined on $\Omega$.

\begin{lemma}
\label{lem:levelset-symmetrizedset-inclusion}
Let $\mu$, $f$, and $\Omega$ be as in \cref{def:talenti}. We have that
\[
\bigcup\, \{ x\,:\, u^\sharp (x) > 0\} = \Omega^\sharp,
\]
where the union is taken among all $\mu$-measurable functions $u$ defined on $\Omega$.
\end{lemma}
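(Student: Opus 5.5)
The inclusion $\supseteq$ is immediate: $\chi_\Omega$ is itself a $\mu$-measurable function on $\Omega$, so $\Omega^\sharp=\{\chi_\Omega^\sharp>0\}$ is one of the sets in the union. The work is in the reverse inclusion. For this it suffices to show that $\{u^\sharp>0\}\subseteq\Omega^\sharp$ for every admissible $u$ (one whose superlevel sets have finite $\mu$-measure).

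First I will write both sets explicitly. By \eqref{eq:def-symmetrized}, $u^\sharp(x)>0$ if and only if $\tilde u^*(\Phi_f(f(x)))>0$. Since $\tilde u^*$ is decreasing and right-continuous, its positivity set is $\{s:\tilde u^*(s)>0\}=\bigcup_{y>0}[0,\Phi_{\tilde u}(y))=[0,m_u)$, where
\[
m_u=\sup_{y>0}\Phi_{\tilde u}(y)=\mu\bigl(\{x\in X:\tilde u(x)\neq 0\}\bigr).
\]
Here I use \eqref{eq:superlevelsets-v*} together with the monotone convergence of $\{|\tilde u|>y\}$ to $\{\tilde u\neq0\}$ as $y\downarrow 0$. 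Equivalently, this is the stated fact that $\{s\ge0:\tilde u^*(s)=0\}=[m_u,+\infty)$, valid also when $m_u=+\infty$. Therefore
\[
\{u^\sharp>0\}=\{x\in X:\Phi_f(f(x))<m_u\}.
\]

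Next I apply the same computation to $\chi_\Omega$, whose extension by zero is $\chi_\Omega$ on $X$. Its nonzero set is $\Omega$, so $m_{\chi_\Omega}=\mu(\Omega)$ and
\[
\Omega^\sharp=\{x:\Phi_f(f(x))<\mu(\Omega)\}.
\]
Since $\tilde u$ vanishes outside $\Omega$, we have $\{\tilde u\neq0\}\subseteq\Omega$, hence $m_u\le\mu(\Omega)$. This gives $\{u^\sharp>0\}\subseteq\Omega^\sharp$, which is the required inclusion.

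I expect the only delicate point to be the identification of the positivity set of $\tilde u^*$ as the interval $[0,m_u)$, including the case $m_u=+\infty$. This has to be done directly from \eqref{eq:decreasing-rearrangement} and \eqref{eq:superlevelsets-v*}, without assuming that $\Phi_{\tilde u}(0)$ is finite. Admissibility of $\chi_\Omega$ is not a problem: its superlevel sets are $\Omega$ for $t<1$ and empty for $t\ge1$, so we may assume $\mu(\Omega)<\infty$, as implicitly required for $\chi_\Omega^\sharp$ to be defined. If $\mu(\Omega)=\infty$, then $\Omega^\sharp=X$ under the same convention, and the inclusion is trivial.
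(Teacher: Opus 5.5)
Your proposal is correct and follows essentially the same route as the paper. The inclusion $\supseteq$ comes from $\chi_\Omega$ itself; for the reverse one, both sets are written as $\{\Phi_f(f(x))<\cdot\}$ with thresholds $\mu(\{\tilde u\neq0\})\le\mu(\Omega)$, matching the paper's $\sigma(x)<\Phi_u(0)\le\mu(\Omega)$ via \eqref{eq:chi_char}, and your monotone-convergence identification of the positivity set of $\tilde u^*$ makes explicit the paper's tacit use of \eqref{eq:superlevelsets-v*} at $y=0$.
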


\begin{proof}
Let us show the two opposite inclusions. On the one hand, since $\chi_\Omega$ is $\mu$-measurable, one inclusion is trivial, by definition of $\Omega^\sharp$.

On the other hand, let us start writing down the zero-superlevel set of $\chi_\Omega^\sharp$. By definition~\eqref{eq:def-symmetrized}, setting $\sigma=\sigma(x)=\Phi_f(f(x))$, we have
\[
\chi_\Omega^\sharp(x) = \chi_\Omega^* (\sigma(x)).
\]
Recall that by definition~\eqref{eq:decreasing-rearrangement}
\[
\chi_\Omega^*(s) = \inf\{t\ge 0\,:\, \Phi_{\chi_\Omega}(t)\le s\},
\]
where the distribution function of $\chi_\Omega$, refer to~\eqref{eq:distribution-function}, is
\[
\Phi_{\chi_\Omega}(t) = \mu\bigl(\{z\in X\,:\, \chi_\Omega(z) > t \}\bigr).
\]
Thus, we have
\[
\Phi_{\chi_\Omega}(t) 
=
\left\{
\begin{aligned}
&\mu(\Omega), \quad &&\text{if $t\in[0,1)$,}\\
&0, &&\text{otherwise.}
\end{aligned}
\right.
\]
Therefore, we get
\begin{equation}
\label{eq:chi_char}
\chi_\Omega^*(s) 
=
\left\{
\begin{aligned}
&0, \quad &&\text{if $s\ge\mu(\Omega)$,}\\
&1, &&\text{otherwise.}
\end{aligned}
\right.
\end{equation}

Let now $u$ be any $\mu$-measurable function defined on $\Omega$, let $x \in \{u^\sharp>0\}$, and $\sigma(x)=\Phi_f(f(x))$ as before. By definition of $u^\sharp$ and the characterization of $u^*$ given in~\eqref{eq:superlevelsets-v*},
\(
\sigma(x) \in [0, \Phi_u(0)).
\)
Therefore, we have $\sigma(x) < \Phi_u(0) \le \mu(\Omega)$. Hence, by~\eqref{eq:chi_char}, $x\in\Omega^\sharp$.
\end{proof}

In virtue of this lemma, given a $\mu$-measurable $u$ defined on a $\mu$-measurable set $\Omega$, we can treat its Talenti symmetrization $u^\sharp$ as defined on $\Omega^\sharp$.

\begin{proposition}[Continuity of the symmetrization]
\label{prop:cnt-sym}
Let $\mu$, $f$, and $\Omega$ be as in \cref{def:talenti}. For every $u \in L^p_\mu(\Omega)$, its Talenti symmetrization $u^\sharp$ belongs to $L^p_\mu(\Omega^\sharp)$ and
\begin{equation}
\label{eq:Lp-norm-preserved}
\|u\|_{p,\mu,\Omega} = \|u^\sharp\|_{p,\mu,\Omega^\sharp}.
\end{equation}
Moreover, if $(u_n)_{n\in\mathbb{N}}\subset L^p_\mu(\Om)$ is a sequence converging to $u$ in $L^p_\mu(\Omega)$, then $u_n^\sharp \to u^\sharp$ in $L^p_\mu(\Omega^\sharp)$.
\end{proposition}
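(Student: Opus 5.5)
The plan is to show that $u^\sharp$ is equimeasurable with $|\tilde u|$ with respect to $\mu$. Norm preservation then follows from the layer-cake formula. Continuity follows from the standard nonexpansivity of decreasing rearrangements, transported through the map $x\mapsto \Phi_f(f(x))$.

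\textbf{Step 1: the key identity.} For every $y\ge0$ we want $\mu(\{u^\sharp>y\})=\Phi_{\tilde u}(y)$. By \eqref{eq:superlevelsets-v*},
\[
\{u^\sharp>y\}=\{x\in X: \Phi_f(f(x))<\Phi_{\tilde u}(y)\}.
\]
It therefore suffices to prove that the push-forward of $\mu$ under $\sigma(x)=\Phi_f(f(x))$ is the Lebesgue measure on $[0,\mu(X))$, that is, $\mu(\{\sigma<s\})=s$ for $s$ in that range. Here the hypotheses on $f$ enter.
\begin{itemize}
\item By \eqref{eq:hp-f-talenti} and the implicit function theorem, the level sets $\{f=t\}$ with $0<t<1$ are smooth hypersurfaces, hence $\mathcal L^N$-null and so $\mu$-null.
\item Consequently $\Phi_f$ is continuous on $(0,1)$. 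It is strictly decreasing there, since $f$ is continuous, $X$ is connected and $\nabla f\neq0$, so $\{t<f<t'\}$ is a nonempty open set of positive measure.
\item Hence $\{\sigma<s\}=\{f>\Phi_f^{-1}(s)\}$, up to $\mu$-null sets, and its measure is exactly $s$.
\end{itemize}
Some care is needed at the endpoints $f\in\{0,1\}$: the sets $\{f=1\}$ and $\{f=0\}$ may carry mass, and $\mu(X)$ may be infinite. I would handle this by noting that $\Phi_{\tilde u}(y)\le\mu(\Omega)$ for $y>0$, and that the relevant values of $s$ lie in $(0,\lim_{t\to0^+}\Phi_f(t))$. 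I expect this step, and in particular checking that the range of $\Phi_f$ covers all needed values without jumps, to be the main obstacle. If a plateau or atom of $f$ appeared, it would break equimeasurability.

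\textbf{Step 2: norm preservation.} Once Step 1 holds, Cavalieri's principle gives
\[
\int_X |u^\sharp|^p\,\mathrm d\mu=\int_0^\infty p\,y^{p-1}\mu(\{u^\sharp>y\})\,\mathrm dy=\int_0^\infty p\,y^{p-1}\Phi_{\tilde u}(y)\,\mathrm dy=\|u\|^p_{p,\mu,\Omega}.
\]
By \cref{lem:levelset-symmetrizedset-inclusion}, $u^\sharp$ vanishes outside $\Omega^\sharp$, which gives \eqref{eq:Lp-norm-preserved}.

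\textbf{Step 3: continuity.} By the same change of variables as in Step 1,
\[
\|u_n^\sharp-u^\sharp\|_{p,\mu,\Omega^\sharp}=\|\tilde u_n^*-\tilde u^*\|_{L^p(0,\infty)}.
\]
It then suffices to prove the classical nonexpansivity inequality $\|v^*-w^*\|_{L^p(0,\infty)}\le\|v-w\|_{p,\mu}$, applied with $v=\tilde u_n$ and $w=\tilde u$. I would prove it in two stages.
\begin{itemize}
\item For nonnegative simple functions, write them as sums of characteristic functions of nested level sets. Then use the layer-cake representation together with the inequality $\mu(A\triangle B)\ge|\mu(A)-\mu(B)|$ for $p=1$.
\item For general $p$, use the convex-function version: $\int F(v^*-w^*)\le\int F(v-w)$ for convex $F$ with $F(0)=0$ (Crowe–Zweibel–Rosenbloom). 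Pass to general functions by monotone approximation from below, using that $|v|$ and $v$ have the same rearrangement.
\end{itemize}
Alternatively, if proving the full inequality is too heavy, a simpler route is available. From $u_n\to u$ in $L^p$ we get $\Phi_{u_n}(y)\to\Phi_u(y)$ at continuity points, hence $u_n^*\to u^*$ a.e. Together with $\|u_n^*\|_p=\|u_n\|_p\to\|u^*\|_p$, the Brezis–Lieb lemma (or Radon–Riesz in $L^p$, valid since $1<p<\infty$) then yields strong convergence $u_n^*\to u^*$ in $L^p(0,\infty)$.
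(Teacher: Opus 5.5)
Your proof of \eqref{eq:Lp-norm-preserved} is essentially the paper's. The paper also shows that $\{u^\sharp>t\}$ is a superlevel set of $f$ with $\mu$-measure $\Phi_{\tilde u}(t)$ and then applies the layer-cake formula. It compresses your Step~1 into the assertion that $\Phi_f|_{[0,1]}$ and $f^*|_{[0,\mu(X))}$ are mutually inverse ``being $f$ continuous''. The real reason is your implicit-function-theorem argument that the levels $\{f=t\}$, $0<t<1$, are $\mu$-null.

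The endpoint problem you flag is genuine, and your proposed way around it does not work. Values $s\in(0,\mu(\{f=1\}))$ are always relevant (take $y$ close to $\|u\|_\infty$), and nothing forces $\Phi_{\tilde u}(y)\le\mu(\{f>0\})$. No argument can close this, because under (H1)--(H2) alone the statement is false. For a counterexample, take $X=\mathbb{R}^N$, $\mu=\mathcal{L}^N$, and $f(x)=g(|x|)$ with $g\equiv1$ on $[0,1]$ and $g(r)=1-e^{-1/(r-1)}$ for $r>1$; here even (i)--(iii) of \cref{thm:talenti} hold. With $u=\chi_{B_{1/2}}$ we get $\Phi_f(f(x))=0$ for $|x|\le1$ and $\Phi_f(f(x))=|B_{|x|}|$ for $|x|>1$. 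Hence $u^\sharp=\chi_{\overline{B_1}}$ and $\|u^\sharp\|_p^p=|B_1|\neq|B_{1/2}|=\|u\|_p^p$. Similarly, let $f$ be a smooth radial profile, strictly decreasing on $B_2\setminus\{0\}$ and vanishing outside $B_2$, and let $\Omega=B_3$. Then $\Phi_f(f(x))\le|B_2|<|B_3|$ for every $x$, so $\chi_\Omega^\sharp\equiv1$ on $\mathbb{R}^N$.

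You should therefore add the hypothesis $\mu(\{f=0\})=\mu(\{f=1\})=0$. The paper uses it tacitly, and it holds in every example of \cref{sec:examples}. With it, $\Phi_f$ is continuous on $[0,1]$ with $\Phi_f(0)=\mu(X)$ and $\Phi_f(1)=0$. Your Step~1 then gives $\mu(\{\Phi_f\circ f<s\})=s$ for every finite $s\le\mu(X)$; strict monotonicity of $\Phi_f$ is not needed.

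For the continuity claim your route genuinely differs from the paper's, and yours is the argument actually required. The paper says continuity ``immediately follows'' from \eqref{eq:Lp-norm-preserved}. But $u\mapsto u^\sharp$ is nonlinear, so $\|u_n^\sharp\|_{p,\mu}=\|u_n\|_{p,\mu}\to\|u\|_{p,\mu}=\|u^\sharp\|_{p,\mu}$ only gives convergence of norms, not of $u_n^\sharp-u^\sharp$. Your Step~3 uses the push-forward identity to reduce to $\|\tilde u_n^*-\tilde u^*\|_{L^p(0,\infty)}\to0$, and both of your arguments for this are sound.

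The nonexpansivity $\|v^*-w^*\|_{L^p(0,\infty)}\le\||v|-|w|\|_{p,\mu}\le\|v-w\|_{p,\mu}$ (Chiti; Crowe--Zweibel--Rosenbloom) gives the stronger estimate $\|u_n^\sharp-u^\sharp\|_{p,\mu,\Omega^\sharp}\le\|u_n-u\|_{p,\mu,\Omega}$. The cost is citing a nontrivial rearrangement inequality.

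Your alternative is self-contained and yields exactly the stated continuity. Convergence in measure gives $\Phi_{\tilde u_n}\to\Phi_{\tilde u}$ off a countable set, hence $\tilde u_n^*\to\tilde u^*$ a.e. Also $\|\tilde u_n^*\|_{L^p(0,\infty)}=\|u_n\|_{p,\mu,\Omega}\to\|\tilde u^*\|_{L^p(0,\infty)}$, so Brezis--Lieb applies.
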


\begin{proof}
Notice that the second part of the statement, \emph{i.e.}, the continuity, immediately follows from~\eqref{eq:Lp-norm-preserved}. Thus, it is enough to prove this equality.

The construction of the Talenti symmetrization ensures that every superlevel set of $u^\sharp$ coincides with a superlevel set of $f$.  
Indeed, for every $t>0$,
\[
\{\,x\in X:u^\sharp(x)>t\,\}=\{\,x\in X:f(x)>f^*(\Phi_{\tilde{u}}(t))\,\}, 
\]
also owing to the fact that---being $f$ continuous---the restriction of $\Phi_f$ to $[0,1]$ and the restriction of $f^*$ to $[0,\mu(X))$ are one the inverse of the other. Moreover, $\Phi_{u^\sharp}(t)=\Phi_{\tilde{u}}(t)$.  Then, by the layer-cake formula (see~\cite[Ex.~1.13]{Mag12book}), the definition~\eqref{eq:distribution-function} of $\mu$-distribution of $\tilde{u}$, the previous equality, the layer-cake formula once again, and \cref{lem:levelset-symmetrizedset-inclusion}, we have
\begin{align*}
    \|u\|^p_{p,\mu,\Omega}
    =
    \|\tilde{u}\|^p_{p,\mu,X}
    &= p\int_0^{+\infty} t^{p-1}\,\mu\bigl(\{\,x\in X:|\tilde{u}(x)|>t\,\}\bigr)\,\di t \\
    &= p\int_0^{+\infty} t^{p-1}\Phi_{\tilde{u}}(t)\,\di t
     = p\int_0^{+\infty} t^{p-1}\Phi_{u^\sharp}(t)\,\di t \\
    &= \|u^\sharp\|^p_{p,\mu,X}
    = \|u^\sharp\|^p_{p,\mu,\Omega^\sharp}.
    \qedhere
\end{align*}
\end{proof}

\section{\texorpdfstring{P\'olya--Szeg\H{o}}{Polya--Szego} principle in Sobolev spaces}
\label{sec:polya-szego}

P\'olya--Szeg\H{o} principles arising from symmetrizations ensure that integrals of Young functions $F(\,\cdot\,)$, depending only on $|\nabla u|$, decrease when evaluated on $|\nabla u^\sharp|$, and we refer the interested reader to~\cite{PS51book} for its first appearance, in the Euclidean context. The usual symmetrizations rely on transforming level sets in isoperimetric sets. Talenti showed in~\cite{Tal97} that this principle stays true for Lipschitz functions also for his symmetrization provided that: the measure space  $(X,\mu)$ possesses a relative isoperimetric inequality with respect to the notions of weighted volume and perimeter, see~\ref{item:isoperimetrica}; $f$ is chosen so that its superlevel sets are isoperimetric sets, see~\ref{item:superlevelsets-f}, and it satisfies a suitable steepest descent condition, see~\ref{item:steepest-descent}. 

We recall below the theorem, tailored to the case of our interest, $F(z) = |z|^p$. 
In order to properly state it, we recall that the $\mu$-perimeter, with $\mu=e^W\mathcal{L}^N$, of a Borel set $\Omega$ relative to $X$ is
\begin{equation}
\label{eq:P-mu}
\Per_\mu(\Omega; X) = \int_{\partial^* \Omega \cap X} e^{W(x)}\,\mathrm{d}\mathcal{H}^{N-1}(x),
\end{equation}
where $\partial^*\Omega$ is the reduced boundary of $\Omega$, refer to~\cite{Mag12book}.

\begin{theorem}[P\'olya--Szeg\H{o} principle of~\cite{Tal97}]
\label{thm:talenti}
Let $\mu$ and $f$ be as in \cref{def:talenti}. Suppose that the following conditions hold:
\begin{enumerate}[label=(\roman*), leftmargin = \parindent, align = left]
\item \label{item:isoperimetrica} there exists a smooth real-valued function $q:[0,\mu(X))\to\R_+$ such that
\[
q(\mu(\Omega)) \le \Per_\mu(\Omega; X), \qquad \text{for all Borel subsets } \Omega\subset X;
\]
\item \label{item:superlevelsets-f}
    the superlevel sets of $f$ saturate the isoperimetric inequality of point~\ref{item:isoperimetrica}, \emph{i.e.},
\begin{equation*}
\Per_\mu(\{\, x \in  X: f(x)>t\,\}; X) = q(\Phi_f^\mu(t)); 
\end{equation*}
\item \label{item:steepest-descent}
    the lines of steepest descent of $f$ are straight, that is, if we define the curves $s \mapsto x(s)\in \Rn$ such that
    \[
    \frac{\di x}{\di s} = \frac{\nabla f}{|\nabla f|}(x(s)),
    \]
    where $s$ is the arclength parameter, then they satisfy
    \[
    \frac{\di^2 x}{\di s^2} = 0.
    \]
\end{enumerate}
Given any Lipschitz function $u:X\to \R$, the Talenti symmetrization $u^\sharp$ of $u$ is locally Lipschitz and for every $p\in(1,+\infty)$, it holds
\begin{equation}
\label{eq:PS-Lip}
     \int_{X} |\nabla u|^p \, \di \mu 
    \geq \int_{X} |\nabla u^\sharp|^p \, \di \mu.
\end{equation}
\end{theorem}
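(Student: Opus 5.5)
The plan is the classical Talenti--Pólya--Szegő scheme: coarea formula, a pointwise Hölder/Jensen bound on each level set, the weighted isoperimetric inequality~\ref{item:isoperimetrica}, and equality for $u^\sharp$ coming from~\ref{item:superlevelsets-f}--\ref{item:steepest-descent}. Since $|\nabla |u||=|\nabla u|$ a.e., I may assume $u\ge 0$. Write $\Phi=\Phi_u$ and $E_t=\{u>t\}$.

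\textbf{Step 1 (estimate for $u$).} By the weighted coarea formula,
\[
\int_X |\nabla u|^p\,\di\mu=\int_0^{\infty}\int_{\{u=t\}}|\nabla u|^{p-1}e^W\,\di\H^{N-1}\,\di t,
\qquad
-\Phi'(t)\ge\int_{\{u=t\}}\frac{e^W}{|\nabla u|}\,\di\H^{N-1}\quad\text{for a.e. }t.
\]
The inequality for $-\Phi'$ holds because the contribution of $\{\nabla u=0\}$ is a nonnegative singular part. Hölder's inequality on $\{u=t\}$ with exponents $p$ and $p'$, applied to $e^W=\bigl(e^W|\nabla u|^{-1}\bigr)^{1/p'}\bigl(e^W|\nabla u|^{p-1}\bigr)^{1/p}$, gives
\[
\Per_\mu(E_t;X)^p\le\Bigl(\int_{\{u=t\}}\frac{e^W}{|\nabla u|}\Bigr)^{p-1}\int_{\{u=t\}}|\nabla u|^{p-1}e^W .
\]
For a.e.\ $t$ (Sard/coarea) $\{u=t\}$ agrees with $\partial^*E_t\cap X$ up to $\H^{N-1}$-null sets. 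Combining these with~\ref{item:isoperimetrica} yields
\[
\int_X|\nabla u|^p\,\di\mu\ge\int_0^{\infty}\frac{q(\Phi(t))^p}{(-\Phi'(t))^{p-1}}\,\di t.
\]

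\textbf{Step 2 (equality for $u^\sharp$).} The superlevel sets of $u^\sharp$ are superlevel sets of $f$ (shown in the proof of \cref{prop:cnt-sym}), and $\Phi_{u^\sharp}=\Phi$. So by~\ref{item:superlevelsets-f} their perimeter equals $q(\Phi(t))$. Equality in Hölder requires $|\nabla u^\sharp|$ to be constant on each level set $\{f=\tau\}$, and this is where~\ref{item:steepest-descent} enters. The steepest-descent lines of $f$ are straight segments orthogonal to the level sets, so the level sets $\{f=\tau\}$ form a parallel family. Hence the normal distance between $\{f=\tau\}$ and $\{f=\tau'\}$ is constant along the level set, and $|\nabla f|$ restricted to $\{f=\tau\}$ is a function of $\tau$ through this distance parametrization. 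Consequently $u^\sharp=g\circ d$, where $d$ is a distance-like coordinate with $|\nabla d|=1$, so $|\nabla u^\sharp|=|g'(d)|$ is constant on each level set.

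Local Lipschitz continuity of $u^\sharp$ follows as well. In the coordinate $d$, the function $g$ is $u^*$ composed with $s\mapsto\mu(\{d<s\})$, whose derivative is the weighted perimeter $q(\cdot)$, bounded below on compact subsets of the interior of the range. A pointwise comparison on level sets then shows that $u^*$ inherits a Lipschitz bound from $u$.

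Applying coarea to $u^\sharp$, with all inequalities of Step 1 now equalities (including $-\Phi'$ having no singular part wherever $\nabla u^\sharp\neq0$, and the set where $\nabla u^\sharp=0$ contributing nothing to the left-hand side), gives
\[
\int_X|\nabla u^\sharp|^p\,\di\mu=\int_0^\infty\frac{q(\Phi(t))^p}{(-\Phi'(t))^{p-1}}\,\di t .
\]
Together with Step 1 this proves~\eqref{eq:PS-Lip}.

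\textbf{Main obstacle.} The delicate point is Step 2: turning hypothesis~\ref{item:steepest-descent} rigorously into ``$|\nabla u^\sharp|$ constant on $\{f=\tau\}$'' and into local Lipschitz regularity. I would first show that along each straight steepest-descent ray the quantity $\int \di s$ between the two levels $\tau,\tau'$ is independent of the ray, by differentiating in $\tau$ and using that the rays are orthogonal to the level sets. A secondary concern is the handling of the flat parts of $u$ in $-\Phi'$, and of the jumps of $\Phi$, where $u^*$ is flat and both sides vanish.
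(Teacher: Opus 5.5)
Your scheme (coarea, Hölder on level sets, isoperimetry, equality for $u^\sharp$) is the standard one behind Talenti's theorem. The paper quotes that theorem without reproving it. Your Step~1 is fine. The gap is at the crux of Step~2, and it is not only a matter of rigour.

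\textbf{What (iii) actually gives.} Put $\nu=\nabla f/|\nabla f|$ and let $\nabla_T$ be the gradient tangential to the level sets of $f$. Since $(D\nu)^T\nu=\tfrac12\nabla|\nu|^2=0$, a short computation gives $(D\nu)\nu=-|\nabla f|\,\nabla_T\bigl(|\nabla f|^{-1}\bigr)$. So (iii) says exactly that $|\nabla f|$ is constant on each \emph{connected component} of $\{f=\tau\}$. Your parallel-surface argument, and the differentiation of ray lengths you propose, yield no more than this. Nothing in (iii) compares different components. For $N=1$ it is vacuous, while $|f'|$ may differ at the two endpoints of the interval $\{f>\tau\}$.

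\textbf{Why this matters.} Equality in Hölder needs $|\nabla u^\sharp|$ constant on the \emph{whole} level set. Suppose $|\nabla f|$ takes different values on two components of some $\{f=\tau_0\}$. Then $\int_X|\nabla u^\sharp|^p\,\mathrm{d}\mu$ is strictly larger than $\int_0^\infty q(\Phi)^p(-\Phi')^{1-p}\,\mathrm{d}t$. In that case one can even build a Lipschitz competitor with smaller energy than its symmetrization. It has the same distribution, and its level sets near $\{f=\tau_0\}$ move with equal normal speed on all components. So your global coordinate $d$ with $|\nabla d|=1$ and $u^\sharp=g\circ d$ is not justified as written. You must either show that $\{f=\tau\}$ is connected for $0<\tau<1$, or use (iii) in the form in which it is really needed: $|\nabla f|$ is a function of $f$. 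The latter holds trivially in all examples of Section~\ref{sec:examples}, where $f$ depends only on $|x|$ or on $\langle x,\theta\rangle$.

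\textbf{Regularity.} Local Lipschitz continuity does not come from a pointwise comparison on level sets; it comes from an integrated one. With $L=\mathrm{Lip}(u)$, coarea and (i) give
$\int_{t_1}^{t_2}q(\Phi(t))\,\mathrm{d}t\le\int_{\{t_1<u\le t_2\}}|\nabla u|\,\mathrm{d}\mu\le L\bigl(\Phi(t_1)-\Phi(t_2)\bigr)$.
By (ii) and the connectedness of $X$, $q>0$ on $(0,\mu(X))$. Hence $u^*$ is locally Lipschitz there, with $|(u^*)'(s)|\le L/q(s)$. Once $|\nabla f|$ is a function of $f$, (ii) gives $|\nabla m|=q(m)$ for $m=\Phi_f\circ f$. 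Therefore $|\nabla u^\sharp|=|(u^*)'(m)|\,q(m)\le L$.

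\textbf{Your secondary concern.} The same absolute continuity settles it. The possible excess in $-\Phi'\ge\int_{\{u^\sharp=t\}}e^W|\nabla u^\sharp|^{-1}\,\mathrm{d}\mathcal{H}^{N-1}$ comes from $\{\nabla u^\sharp=0\}$, not from where $\nabla u^\sharp\neq0$. It vanishes for a.e.\ $t$ because $Z=\{(u^*)'=0\}$ satisfies $\mathcal{L}^1(u^*(Z))=0$. Equivalently, compute $\int_X|\nabla u^\sharp|^p\,\mathrm{d}\mu=\int_0^{\mu(X)}|(u^*)'(s)|^p q(s)^p\,\mathrm{d}s$ directly and substitute $s=\Phi(t)$.
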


We remark that Talenti does not explicitly mention that the symmetrized function is locally Lipschitz, but this can be derived by closely inspecting his proof. Further, he does not use as perimeter the weighted $(N-1)$-dimensional Hausdorff measure~\eqref{eq:P-mu} but rather the relative dual-sided Minkowski content~\cite[Eq.~(1.7)]{Tal97}. Nevertheless, in the proof he uses the weighted variational relative perimeter to exploit coarea formula. This is well-known to be equivalent to~\eqref{eq:P-mu}, which coincides with the relative dual-sided Minkowski content for almost every superlevel set of a Lipschitz function. Finally, we refer the reader to~\cite{Mag91} for a finer study of condition~\ref{item:steepest-descent}.

We now prove that~\eqref{eq:PS-Lip} extends from Lipschitz functions on $X$ to Sobolev functions with zero trace on any Borel subset $\Omega$, via a standard density argument.

\begin{proposition}[P\'olya--Szeg\H{o} principle in $W^{1,p}_{\mu,0}(\Omega)$]
\label{prop:polya-szego}
Let $\mu$, $f$, and $\Omega$ be as in \cref{def:talenti}. Assume further conditions~\ref{item:isoperimetrica}--\ref{item:steepest-descent} of \cref{thm:talenti}. Let $p\in(1,+\infty)$. Then, for every $u \in W^{1,p}_{\mu,0}(\Om)$, the Talenti symmetrization $u^\sharp$  belongs to $W^{1,p}_{\mu,0}(\Om^\sharp )$ and
\begin{equation}
\label{eq:extended-ps}
     \int_\Om |\nabla u|^p \, \di \mu 
    \geq \int_{\Om^\sharp} |\nabla u^\sharp|^p \, \di \mu.
\end{equation}
\end{proposition}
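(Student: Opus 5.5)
The plan is a density argument built on \cref{thm:talenti} and \cref{prop:cnt-sym}. Fix $u\in W^{1,p}_{\mu,0}(\Omega)$. By definition there are $u_n\in C^\infty_c(\Omega)$ with $u_n\to u$ in the norm~\eqref{eq:norm}. Extending each $u_n$ by zero gives a smooth, compactly supported, hence Lipschitz, function on $X$. \cref{thm:talenti} then tells us that $u_n^\sharp$ is locally Lipschitz and
\[
\int_\Omega |\nabla u_n|^p\,\di\mu \ge \int_X |\nabla u_n^\sharp|^p\,\di\mu .
\]
By \cref{lem:levelset-symmetrizedset-inclusion}, $u_n^\sharp$ vanishes outside $\Omega^\sharp$, so the right-hand side is an integral over $\Omega^\sharp$. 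By \cref{prop:cnt-sym}, $\|u_n^\sharp\|_{p,\mu,\Omega^\sharp}=\|u_n\|_{p,\mu,\Omega}$ and $u_n^\sharp\to u^\sharp$ in $L^p_\mu(\Omega^\sharp)$. Hence $(u_n^\sharp)$ is bounded in the norm $\|\cdot\|_{1,p,\mu,\Omega^\sharp}$.

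Next I pass to the limit. The weight $e^W$ is continuous and positive, so $L^p_\mu$ is reflexive for $p\in(1,\infty)$. After extracting a subsequence, $\nabla u_n^\sharp\rightharpoonup G$ weakly in $L^p_\mu(\Omega^\sharp;\Rn)$. To identify $G=\nabla u^\sharp$ in the distributional sense, test against $\varphi\in C^\infty_c(X)$. On $\operatorname{supp}\varphi$ the weight is bounded above and below, so weighted $L^p$ convergence there implies unweighted $L^1_{\rm loc}$ convergence, and the integration by parts passes to the limit. Lower semicontinuity of the norm under weak convergence then gives
\[
\int_{\Omega^\sharp}|\nabla u^\sharp|^p\,\di\mu\le\liminf_n\int_{\Omega^\sharp}|\nabla u_n^\sharp|^p\,\di\mu\le\lim_n\int_\Omega|\nabla u_n|^p\,\di\mu=\int_\Omega|\nabla u|^p\,\di\mu,
\]
which is~\eqref{eq:extended-ps}.

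The main obstacle is membership, namely showing $u^\sharp\in W^{1,p}_{\mu,0}(\Omega^\sharp)$. This space is defined as the closure of $C^\infty_c(\Omega^\sharp)$, and so far we only know $u^\sharp$ is a weak limit of locally Lipschitz functions. I would argue in two steps.

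First, each $u_n^\sharp$ lies in $W^{1,p}_{\mu,0}(\Omega^\sharp)$. Since $u_n$ has compact support, $\Phi_{u_n}(0)<\mu(\Omega)$ whenever the support is compactly contained in $\Omega$ with a margin. In any case, $u_n^\sharp$ is supported on a superlevel set $\{f\ge c\}$ and its positivity set is contained in $\Omega^\sharp=\{f>f^*(\mu(\Omega))\}$. I would approximate $u_n^\sharp$ by the truncations $(u_n^\sharp-\varepsilon)^+$. These are supported in $\{u_n^\sharp\ge\varepsilon\}$, which by the proof of \cref{prop:cnt-sym} is a closed superlevel set of $f$ strictly inside $\Omega^\sharp$. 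If $\Omega^\sharp$ is unbounded, I would also multiply by cutoffs, using that the gradient is in $L^p_\mu$ and $\mu(\{u_n^\sharp>\varepsilon\})<\infty$. Finally I would mollify; this is legitimate because the weight is locally bounded above and below. This yields convergence in the norm~\eqref{eq:norm}, with the truncations converging by dominated convergence.

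Second, $W^{1,p}_{\mu,0}(\Omega^\sharp)$ is a closed subspace, hence weakly closed, by Mazur's lemma. So the weak limit $u^\sharp$ belongs to it. This completes the plan; the delicate points are the approximation of $u_n^\sharp$ near the boundary $\{f=f^*(\mu(\Omega))\}$ and near infinity, and both are handled by the truncation $(\cdot-\varepsilon)^+$.
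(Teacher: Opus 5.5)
Your route is the paper's own: approximation by $C^\infty_c(\Omega)$, Talenti's Lipschitz inequality combined with \cref{lem:levelset-symmetrizedset-inclusion}, the $L^p_\mu$-continuity of \cref{prop:cnt-sym}, weak compactness, lower semicontinuity, and membership deduced from $u_n^\sharp\in W^{1,p}_{\mu,0}(\Omega^\sharp)$ plus closedness. In some places you are more careful than the paper. Weak sequential compactness really comes from reflexivity of $L^p_\mu$, which Banach--Alaoglu in an arbitrary normed space does not provide. You also identify the weak limit of $\nabla u_n^\sharp$ with $\nabla u^\sharp$ distributionally. Finally, your truncation--cutoff--mollification replaces the appeal to Brezis.

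There is, however, a genuine gap in the step $u_n^\sharp\in W^{1,p}_{\mu,0}(\Omega^\sharp)$. It sits exactly where the paper's proof also fails, namely its claim that $v_n^\sharp$ vanishes on $\partial\Omega^\sharp$. You treat $\partial\Omega^\sharp$ as consisting only of the level set $\{f=f^*(\mu(\Omega))\}$ and infinity. When $X\neq\mathbb{R}^N$, however, it may contain a portion of $\partial X$: in the cone examples $\Omega^\sharp=B_r(0)\cap X$, and in the slab examples $\Omega^\sharp=H_r(\theta)\cap X$. In that case $\{u_n^\sharp\ge\varepsilon\}$ is closed only relative to $X$. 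Its closure in $\mathbb{R}^N$ meets $\partial X$, and near $\partial X$ the function $u_n^\sharp$, being a function of $f$ alone, stays positive. Neither the truncation $(\cdot-\varepsilon)^+$ nor the cutoffs move the support away from $\partial X$, so mollifying does not produce elements of $C^\infty_c(\Omega^\sharp)$.

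This cannot be patched under the literal definition of $W^{1,p}_{\mu,0}$. Take $X=\{x_N>0\}$, $\mu=\mathcal{L}^N$, $f=1/(1+\|x\|^2)$, $\Omega$ a ball well inside $X$, and $0\le u\in C^\infty_c(\Omega)$ nonzero. Then $u^\sharp$ is radial and tends to $\|u\|_\infty>0$ at the origin. Its extension by zero therefore jumps across the flat part of the half-ball $\Omega^\sharp$, so $u^\sharp\notin W^{1,p}_0(\Omega^\sharp)$. Consistently, \cref{cor:fk-ineq} would then assert that a half-ball has Dirichlet eigenvalue no larger than a ball of equal volume, which is false.

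So your argument, like the paper's, is complete whenever $\overline{\Omega^\sharp}\subset X$. This holds for $X=\mathbb{R}^N$, which covers the Euclidean, radial log-convex and Gaussian-type examples posed on all of $\mathbb{R}^N$. For cones and slabs, the target space must impose the zero condition only on $\partial\Omega^\sharp\cap X$; that is, one is really dealing with a mixed Dirichlet--Neumann problem.
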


\begin{proof}

Let $v$ be a function in $W^{1,p}_{\mu,0}(\Om)$. By definition, there exists a sequence $(v_n)_{n \in \mathbb{N}}\subset C^\infty_c(\Om)$ such that $v_n \to v$ in the norm~\eqref{eq:norm}. 

For all $n$, by \cref{thm:talenti}, ${v}^\sharp_n$ is locally Lipschitz continuous whereas by \cref{lem:levelset-symmetrizedset-inclusion}, $\supp(v_n^\sharp)\subset \overline{\Omega^\sharp}$. Therefore, extending all functions to $X$ by setting them equal to zero on $X\setminus \mathrm{dom}(u)$, again by \cref{thm:talenti}, we have
\begin{equation}\label{eq: dis norme gradiente}
\int_{\Omega} |\nabla {v}_n|^p\di\mu
=
\int_{X} |\nabla {v}_n|^p\di\mu
\geq
\int_{X}|\nabla {v}_n^\sharp|^p\di\mu
\ge
\int_{\Omega^\sharp}|\nabla {v}_n^\sharp|^p\di\mu.
\end{equation}
Hence, $v^\sharp_n$ belongs to the space $W^{1,p}_\mu(\Om^\sharp)\cap C^0(\overline{\Om^\sharp})$ and vanishes on $\pa\Om^\sharp$. Adapting standard results to the setting of weighted Sobolev spaces, see, \emph{e.g.},~\cite[Thm.~9.17 and Rem.~19]{Bre10book}, this implies that $v_n^\sharp \in W^{1,p}_{\mu,0}(\Om)$, for every $n \in \mathbb{N}$.

By the continuity of the symmetrization (see \cref{prop:cnt-sym}), we obtain that $v_n^\sharp\to v^\sharp$ in $L^p_\mu(\Om^\sharp)$. Moreover, the gradients are uniformly bounded in $L^p_\mu$, as a consequence of~\eqref{eq: dis norme gradiente} together with the convergence of $(v_n)_n$ in the norm~\eqref{eq:norm}. Hence, $(v_n^\sharp)_{n \in \mathbb{N}}$ is bounded in $W^{1,p}_{\mu,0}(\Om^\sharp)$. By Banach--Alaoglu Theorem, which holds for any normed vector space, there exist a subsequence $(v_{n_k}^\sharp)_{k\in\mathbb{N}}$ and a function $V \in W^{1,p}_{\mu,0}(\Om^\sharp)$ such that
\[
v_{n_k}^\sharp\rightharpoonup V \quad \text{weakly in } L^p_\mu(\Om^\sharp)\quad \text{and} \quad \nabla v_{n_k}^\sharp\rightharpoonup\nabla V\quad \text{weakly in } L^p_\mu(\Om^\sharp).
\]
From the uniqueness of the limit, $V=v^\sharp$, $\mu$-almost everywhere, therefore $v^\sharp \in W^{1,p}_{\mu,0}(\Omega^\sharp)$. Further, by the weak convergence of the gradients, by inequality~\eqref{eq: dis norme gradiente}, and by the convergence of $(v_n)_{n\in\mathbb{N}}$ in $W^{1,p}_{\mu,0}(\Om)$, one has
\[
\|\nabla v^\sharp\|_{p,\mu,\Om^\sharp}
\leq 
\liminf_{k \to \infty}\|\nabla v_{n_k}^\sharp\|_{p,\mu,\Om^\sharp}
\leq
\liminf_{k\to \infty}\|\nabla v_{n_k}\|_{p,\mu,\Om}
=
\|\nabla v\|_{p,\mu,\Om}.
\qedhere
\]
\end{proof}

As usual, the P\'olya--Szeg\H{o} principle on Sobolev spaces implies the corresponding Faber--Krahn and Saint-Venant inequalities.

\begin{corollary}[Faber--Krahn inequality]
\label{cor:fk-ineq}
Let $\mu$, $f$, and $\Omega$ be as in \cref{def:talenti}. Assume further conditions~\ref{item:isoperimetrica}--\ref{item:steepest-descent} of \cref{thm:talenti}. Let $p\in(1,+\infty)$. Then,
\begin{equation*}
    \lambda^\mu_{1,p}(\Om^\sharp)\leq \lambda_{1,p}^\mu(\Om).
\end{equation*}
\end{corollary}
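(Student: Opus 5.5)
The plan is to compare Rayleigh quotients: symmetrize any admissible competitor on $\Omega$ and show that its symmetrization is an admissible competitor on $\Omega^\sharp$ with quotient no larger. Fix $u\in W^{1,p}_{\mu,0}(\Omega)$ with $u\neq0$.

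First, I would check that $u^\sharp$ is admissible. By \cref{prop:polya-szego}, $u^\sharp\in W^{1,p}_{\mu,0}(\Omega^\sharp)$. By \cref{prop:cnt-sym},
\[
\|u^\sharp\|_{p,\mu,\Omega^\sharp}=\|u\|_{p,\mu,\Omega}>0,
\]
so $u^\sharp\neq0$ and $u^\sharp$ is a legitimate competitor for $\lambda^\mu_{1,p}(\Omega^\sharp)$.

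Second, I would combine this norm identity with the gradient inequality~\eqref{eq:extended-ps}:
\[
\lambda^\mu_{1,p}(\Omega^\sharp)\le \frac{\|\nabla u^\sharp\|^p_{p,\mu,\Omega^\sharp}}{\|u^\sharp\|^p_{p,\mu,\Omega^\sharp}}\le \frac{\|\nabla u\|^p_{p,\mu,\Omega}}{\|u\|^p_{p,\mu,\Omega}}.
\]
Taking the infimum over all admissible $u$ then gives the claim.

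The one point needing care is that \cref{def:talenti} asks the superlevel sets $\{|u|>t\}$ to have finite $\mu$-measure, so that $u^\sharp$ is defined. For $u\in L^p_\mu(\Omega)$ this follows from Chebyshev's inequality, $\mu(\{|u|>t\})\le t^{-p}\|u\|^p_{p,\mu,\Omega}<\infty$ for every $t>0$. I do not expect any real obstacle, since all the analytic work is contained in \cref{prop:polya-szego}.
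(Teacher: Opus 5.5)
Your proposal is correct and follows the paper's proof essentially verbatim. You symmetrize each competitor, use \cref{prop:polya-szego} for admissibility and the gradient inequality, use \cref{prop:cnt-sym} for the $L^p_\mu$-norm identity, and take the infimum. Your two extra checks are details the paper leaves implicit: Chebyshev's inequality giving finite-measure superlevel sets, and $u^\sharp\neq0$.
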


\begin{proof}
Let $u \in W^{1,p}_{\mu,0}(\Om)$. By \cref{prop:polya-szego}, the Talenti symmetrization $u^\sharp$ belongs to $ W^{1,p}_{\mu,0}(\Om^\sharp)$. Moreover, by~\eqref{eq:Lp-norm-preserved} and~\eqref{eq:extended-ps}, we have
\[
\lambda^\mu_{1,p}(\Om^\sharp)\leq \frac{\displaystyle\int_{\Om^\sharp}|\nabla u^\sharp|^p\di \mu}{\displaystyle\int_{\Om^\sharp}|u|^p\di\mu}\leq \frac{\displaystyle\int_\Om|\nabla u|^p\di \mu}{\displaystyle\int_\Om|u|^p\di\mu}.
\]
 Taking now the infimum on the right-hand side among non identically zero functions in $W^{1,p}_{\mu,0}(\Om)$, the claim follows.
\end{proof}

\begin{corollary}[Saint-Venant inequality]
\label{cor:sv-ineq}
Let $\mu$, $f$, and $\Omega$ be as in \cref{def:talenti}. Assume further conditions~\ref{item:isoperimetrica}--\ref{item:steepest-descent} of \cref{thm:talenti}. Let $p\in(1,+\infty)$. Then,
\begin{equation*}
    T^\mu_{p}(\Om^\sharp)\geq T_{p}^\mu(\Om).
\end{equation*}
\end{corollary}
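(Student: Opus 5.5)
The proof mirrors that of \cref{cor:fk-ineq}. I would compare the two ratios defining $T_p^\mu$ for $u$ and for $u^\sharp$, using \cref{prop:polya-szego} for the denominator and an $L^1$ version of \cref{prop:cnt-sym} for the numerator. The statement is vacuous if $T_p^\mu(\Om)$ is the supremum of an empty set, so I fix $u\in W^{1,p}_{\mu,0}(\Om)$ with $\nabla u\neq 0$ and aim to show
\[
T_p^\mu(\Om^\sharp)\ \ge\ \frac{\|u\|^p_{1,\mu,\Om}}{\|\nabla u\|^p_{p,\mu,\Om}} .
\]
Taking the supremum over such $u$ then gives the claim.

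For the numerator, the layer-cake computation in the proof of \cref{prop:cnt-sym} works verbatim for any exponent, in particular for $p=1$, since it only uses $\Phi_{u^\sharp}=\Phi_{\tilde u}$. Hence $\|u^\sharp\|_{1,\mu,\Om^\sharp}=\|u\|_{1,\mu,\Om}$, with both sides possibly infinite. For the denominator, \cref{prop:polya-szego} gives $u^\sharp\in W^{1,p}_{\mu,0}(\Om^\sharp)$ together with $\|\nabla u^\sharp\|_{p,\mu,\Om^\sharp}\le\|\nabla u\|_{p,\mu,\Om}$.

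Provided $\nabla u^\sharp\neq0$, $u^\sharp$ is admissible for $T_p^\mu(\Om^\sharp)$ and its ratio dominates that of $u$, which finishes the proof. The main obstacle is therefore the degenerate case $\nabla u^\sharp=0$ $\mu$-a.e. on $\Om^\sharp$ although $\nabla u\neq 0$. Since $\Om^\sharp$ is a superlevel set of $f$, it is connected under the geometric assumptions, because the lines of steepest descent are straight. So in that case $u^\sharp$ would be a nonzero constant $c$ on $\Om^\sharp$. I would handle this in two sub-cases.

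\emph{Case $\mu(\Om^\sharp)<+\infty$.} Then $u^\sharp=c\,\chi_{\Om^\sharp}$, so $u$ is equimeasurable with $c\,\chi_\Om$ and $|u|=c$ a.e.\ on $\{u\neq0\}$. For a Sobolev function this forces $\nabla u=0$ a.e., since $\nabla u=0$ a.e.\ on level sets and on $\{u=0\}$. This contradicts the assumption, so this case cannot occur.

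\emph{Case $\mu(\Om^\sharp)=+\infty$.} Here constants may lie in $W^{1,p}_{\mu,0}(\Om^\sharp)$, as in the Gaussian example, where $\mu(X)<\infty$ and $\Om^\sharp=X$. I would approximate $u^\sharp$ by truncations $\min\{u^\sharp,c-\varepsilon\}$ or by perturbations $u^\sharp+\varepsilon\varphi$. These have nonzero gradient, change $\|\cdot\|_1$ by $o(1)$, and make the gradient norm arbitrarily small. This would make $T_p^\mu(\Om^\sharp)=+\infty$, so the inequality holds trivially. I expect this step to need the most care.
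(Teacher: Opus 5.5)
Your core argument is the paper's own: \cref{prop:polya-szego} controls the denominator, and the $L^1$ version of \cref{prop:cnt-sym} (which the paper uses implicitly, cf.\ \cref{rem:pq-eingevalue}) preserves the numerator. You then take the supremum over $u$. The paper never checks that $u^\sharp$ is admissible, i.e.\ that $\nabla u^\sharp\neq0$. Your discussion of this degenerate case therefore fills a point the paper skips, and your first sub-case settles it correctly.

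Your second sub-case, however, is muddled, and the plan you sketch there would not work as written.
\begin{itemize}
\item Truncating a constant gives $\min\{c\chi_{\Om^\sharp},c-\varepsilon\}=(c-\varepsilon)\chi_{\Om^\sharp}$, which still has zero gradient on $\Om^\sharp$.
\item A nonzero constant on a set of infinite $\mu$-measure has infinite $L^1_\mu$-norm, so ``changes $\|\cdot\|_1$ by $o(1)$'' is meaningless.
\item The Gaussian example has $\mu(X)<+\infty$, so it falls into your \emph{first} sub-case, not the second.
\end{itemize}
The fix is that the second sub-case is simply empty. By \eqref{eq:Lp-norm-preserved}, $\|u^\sharp\|_{p,\mu,\Om^\sharp}=\|u\|_{p,\mu,\Om}<+\infty$, which is incompatible with $u^\sharp\equiv c\neq0$ on a set of infinite measure. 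Equivalently, $\Phi_{\tilde u}(t)\le t^{-p}\|u\|^p_{p,\mu,\Om}<+\infty$ for $t>0$. Hence the degenerate case never occurs, and no approximation is needed.

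Your claim that $\Om^\sharp$ is connected ``because the lines of steepest descent are straight'' is unjustified, but it is also unnecessary. Write $u^\sharp=g\circ f$ with $g=\tilde u^{*}\circ\Phi_f$ monotone. Then $\Om^\sharp=\{f\in I\}$ for an up-interval $I$, and $\nabla f\neq0$ on $\{0<f<1\}$. Straightening $f$ locally into a coordinate, $\nabla u^\sharp=0$ a.e.\ forces $g$ to be constant on the interval $f(\Om^\sharp)\cap(0,1)$.
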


\begin{proof}
Let $u \in W^{1,p}_{\mu,0}(\Om)$. By \cref{prop:polya-szego}, the Talenti symmetrization $u^\sharp$ belongs to $ W^{1,p}_{\mu,0}(\Om^\sharp)$. Moreover, by~\eqref{eq:Lp-norm-preserved} and~\eqref{eq:extended-ps}, we have
\[
T^\mu_{p}(\Om^\sharp)\geq \frac{\left(\displaystyle\int_{\Om^\sharp}|u^\sharp|\di \mu\right)^{p}}{\displaystyle\int_{\Om^\sharp}|\nabla u^\sharp|^p\di \mu}
\geq
\frac{\left(\displaystyle\int_\Om|u|\di \mu\right)^{p}}{\displaystyle\int_\Om|\nabla u|^p\di \mu}.
\]
Taking now the supremum on the right-hand side among non constant functions in $W^{1,p}_{\mu,0}(\Om)$, the claim follows.
\end{proof}

\begin{remark}
\label{rem:pq-eingevalue}
\cref{cor:fk-ineq,cor:sv-ineq} are particular cases of the more general Faber--Krahn-type inequality that one can prove on the first Dirichlet $(p,q)$-eigenvalue of the $p$-weighted Laplacian on $\Omega$, $\lambda^\mu_{1,p,q}(\Omega)$. This is defined as
\[
\lambda^\mu_{1,p,q}(\Omega)
=
\inf
\left\{
\frac{\|\nabla u\|^p_{p,\mu,\Omega}}{\|u\|^{p}_{q,\mu,\Omega}} \,:\, u\in W^{1,p}_{\mu,0}(\Omega),\, u \neq 0
\right\},
\]
where $q$ is such a way that the continuous embedding $W^{1,p}_{\mu,0}(\Omega) \hookrightarrow L^q_\mu(\Omega)$ holds. These first $(p,q)$-eigenvalues are linked to the optimal Poincar\'e--Sobolev constant in the above-mentioned embedding, refer to~\cite{Bra14} in the Euclidean case.

The Talenti symmetrization keeps unchanged not only the $L^p_\mu$-norm of $u$, but rather all of its $L^q_\mu$-norms, as it can be seen by following the same exact proof of \cref{prop:cnt-sym}. Then, P\'olya--Szeg\H{o} principle of \cref{prop:polya-szego} entails, as in the proof of \cref{cor:fk-ineq}, that
\[
\lambda^\mu_{1,p,q}(\Omega) \ge \lambda^\mu_{1,p,q}(\Omega^\sharp).
\]
In particular, \cref{cor:fk-ineq} corresponds to the case $q=p$, whereas \cref{cor:sv-ineq} to the case $q=1$, since $T_p^\mu(\Omega) = 1/\lambda_{1,p,1}^\mu(\Omega)$.

\end{remark}

\section{Examples}
\label{sec:examples}

In this section we collect several examples that fall within the general framework presented, thus obtaining the corresponding Faber--Krahn and Saint-Venant inequalities. Actually, in all of the following settings, it remains proved the more general inequality mentioned in~\cref{rem:pq-eingevalue} for the first $(p,q)$-ei\-gen\-val\-ue of the weighted $p$-Laplacian. We remark that the settings covered in \cref{sec:CROS,sec:gauss-ani,sec:gauss-perturbed} appear to be new. 

\subsection{Isoperimetric sets are concentric balls}

Let $X$ be an open convex cone centered in the origin and let $\mu$ be a locally finite measure. Assume that the one-para\-met\-er family of concentric balls $\{\,B_r(0)\,:\,r>0\,\}$ is such that the family
\begin{equation}
\label{eq:balls-iso}
\{\,B_r(0) \cap X\,\}_r,
\end{equation}
is a complete family of isoperimetric sets, that is, for all $m\in(0,\mu(X))$ there exists $r=r(m)>0$ such that $\mu(B_r(0)\cap X)=m$ and
\begin{equation}
\label{eq:iso-balls}
\Per_\mu (B_r(0)\cap X; X) \le \Per_\mu(\Omega; X),
\qquad \forall \Omega\subset X
\text{ with $\mu(\Omega)=m$,}
\end{equation}
that is, request~\ref{item:isoperimetrica} of \cref{thm:talenti} holds with
\[
q(m) = \Per_\mu(B_{r(m)}(0)\cap X; X).
\]
We now let
\[
f(x) = \frac{1}{1+\|x\|^2}, \qquad \forall x\in X.
\]
It is trivial that $\inf f = 0$ and $\sup f = 1$. Further, this function can be thought of as the composition $f = h(g)$, where
\[
h(t) = \frac{1}{1+t}
\qquad\text{and}\qquad 
g(x)=\|x\|^2.
\]
Therefore, it is smooth and its gradient never vanishes since $\nabla f = h'(g)\nabla g$, with $h'<0$ on $(0,+\infty)$ and $\nabla g \neq 0$ outside the origin. Therefore, \eqref{eq:hp-f-talenti} is met.
Further, it is immediate that all superlevel sets of $f$ are of the form in~\eqref{eq:balls-iso} so that: condition~\eqref{eq:hp-f-mu-talenti} holds, having asked $\mu$ to be locally finite; condition~\ref{item:superlevelsets-f} holds, having assumed the one-parameter family $\{\,B_r(0) \cap X\,\}_r$ to be isoperimetric.
Finally, condition~\ref{item:steepest-descent} is met, since the lines of the steepest descent are rays exiting the origin, being $f$ radial.

\subsubsection{Standard Lebesgue measure}
\label{sec:euclidean}

The standard Lebesgue measure falls under the above assumptions. It corresponds, up to a multiplicative prefactor, to the choices
\[
X=\mathbb{R}^N 
\qquad\text{and}\qquad
W(x)\equiv c \text{ with } c\in\mathbb{R}.
\]
The classic isoperimetric inequality has been the object of extensive studies in the 20th century, eventually fully solved by De Giorgi, refer for instance to the book~\cite{Mag12book} for a proof of~\eqref{eq:iso-balls} in this setting.

The corresponding Faber--Krahn inequality had been proved independently by Faber~\cite{Fab23} and Krahn~\cite{Kra25}, whereas the corresponding Saint-Venant inequality by P\'olya~\cite{Pol48}, both relying on the \emph{Schwarz symmetrization}~\cite{HL30,Sch90book}. Such a symmetrization is a special case of the one of \cref{sec:symmetrization}.

\subsubsection{Radial log-convex measures}

A measure $\mu = e^W\mathcal{L}^N$ is said to be \emph{log-convex} if the potential $W$ is convex. A long-standing conjecture asserted that if $W$, besides being convex, is smooth and radial, then balls centered at the origin are isoperimetric sets, that is,~\eqref{eq:iso-balls} holds with $X=\mathbb{R}^N$. This has been positively settled in~\cite{Cha19}. Choosing then $X=\mathbb{R}^N$ and $W(x)$ satisfying the above properties yields the corresponding Faber--Krahn and Saint-Venant inequalities. The former has been recently proved in~\cite[Thm.~1.1]{CM25} and~\cite[Sect.~7.1]{NV25}.

\subsubsection{Convex cones and homogeneous densities}
\label{sec:CROS}

Let $X\subset \mathbb{R}^N$ be an open and convex cone with vertex at the origin. If the potential $W$ is constant, \emph{i.e.}, the measure $\mu$ is the Lebesgue measure up to a prefactor, the relative isoperimetric inequality~\eqref{eq:iso-balls} was first proved in~\cite{LP90}. Thus, the Faber--Krahn and Saint-Venant inequalities follow in this case. 

Relative weighted isoperimetric inequalities in open and convex cones with vertex at the origin---different from the whole space or a half-space---have also been proved in~\cite[Thm.~1.3]{CROS16}, under suitable assumptions on the density function $w=e^{W}$. Specifically, the density needs to be: continuous up to the boundary of $X$; positively homogeneous of degree $\alpha\ge 0$; and such that $w^{1/\alpha}$ is concave, if $\alpha>0$. Hence, the Faber--Krahn and Saint-Venant inequalities follow when the above assumptions are met.

In particular, we refer to~\cite[Sect.~2]{CROS16} for several choices of $X$ and of $w$ which fall within these hypotheses. We here only recall the case~\cite[Sect.~2(iv)]{CROS16}, \emph{i.e.}, 
\[
W(x) = \log\left(\prod_{i=1}^k x_i^{\alpha_i}\right),
\]
with $\alpha_i > 0$ and $X=\{\,x\in\mathbb{R}^N\,:\, x_i>0,\, \forall i=1,\dots,k\,\}$, which leads to the density
\[
w = e^W = \prod_{i=1}^k x_i^{\alpha_i}.
\]
When $k=N$, one usually refers to such a density as to a \emph{monomial weight}.

A Faber--Krahn inequality has been proved in the case $N=2$ and $k=1$, for the standard Laplacian ($p=2$) in~\cite{MS81}.

\subsection{Isoperimetric sets are half-spaces}
Suppose the ambient space $X$ is either an open slab, a half-space, or all of $\Rn$; that is, $X=\R^{N-1}\times(a,b)$ with $a, b \in [-\infty, +\infty]$ and $a < b$. Let $\mu$ be a finite measure on $X$ so that~\eqref{eq:hp-f-mu-talenti} is satisfied, independently of $f$. Given a direction $\nu \in \mathbb{S}^{N-1}$ we consider the one-parameter family of half-spaces $\{\,H_r(\nu)\,:\, r\in\mathbb{R}\,\}$, where
\[
H_r(\nu)=\{\, x \in \R^{N}\,:\,\langle x,\nu\rangle>r\,\}.
\]
Assume there exists a direction $\theta$ such that the family $\{H_r(\theta)\cap X\}_r$ is such that for all $m\in(0,\mu(X))$, there exists $r=r(m)$ with $\mu(H_r(\theta)\cap X)=m$ such that
\begin{equation}
\label{eq:iso-half-spaces}
\Per_\mu (H_r(\theta)\cap X; X) \le \Per_\mu(\Omega; X),
\qquad \forall \Omega\subset X
\text{ with $\mu(\Omega)=m$,}
\end{equation}
that is, request~\ref{item:isoperimetrica} of \cref{thm:talenti} holds with
\[
q(m) = \Per_\mu(H_{r(m)}(\theta)\cap X; X).
\]
Given $h:\R\to (0,1)$ any smooth, strictly increasing map with $\inf h=0$ and $\sup h=1$, we let 
\begin{equation}\label{eq:def-f-halfspaces}
f_{\theta}(x)=h(\langle x,\theta\rangle), \qquad \forall x \in X.
\end{equation}
Trivially: $f_{\theta}$ is smooth; $\inf f_{\theta}=0$; $\sup f_{\theta} =1$; and $\nabla f_{\theta} \neq 0$, since we have $\nabla f_{\theta}=h'(\langle x,\theta \rangle )\theta$ with $h'>0$ in $\R$. Therefore, assumption~\eqref{eq:hp-f-talenti} is met. 

By construction, the superlevel sets of $f_{\theta}$ are precisely half-spaces orthogonal to $\theta$, intersected with $X$:
\[
\{\, x \in X\,:\,f_{\theta}(x)>t\,\}
=
\{\, x \in X\,:\,\langle x,\theta\rangle>h^{-1}(t)\,\}=H_{h^{-1}(t)}(\theta)\cap X.
\]
This, combined with our hypothesis on $\{H_r(\theta)\cap X\}_{r}$, guarantees condition~\ref{item:superlevelsets-f} to be satisfied. Lastly, the lines of steepest descent are straight lines parallel to $\theta$, hence also condition~\ref{item:steepest-descent} holds.

\subsubsection{Standard Gaussian measure}
\label{sec:gaussian}

The Gaussian setting is the first instance that comes to mind where half-spaces are the isoperimetric sets. It corresponds, up to a multiplicative prefactor, to the choices
\begin{equation}
\label{eq:std-gaussian}
X=\Rn
\qquad\text{and}\qquad 
W(x) = -\frac{\|x\|^2}{2},
\end{equation}
that is, the Euclidean space weighted by $e^{-\|x\|^2/2}$.

The celebrated Gaussian isoperimetric inequality, independently proved by Borell~\cite{Bor75} and Sudakov--Cirel'son~\cite{SC74}, guarantees the validity of~\eqref{eq:iso-half-spaces} for any choice of $\theta$.

The associated Faber--Krahn inequality has been proved in~\cite[Thm.~4.7]{Ehr84}, whereas the Saint-Venant inequality is proved in~\cite[Prop.~5.6]{Liv24} for $p=2$ (for general $p$ it is mentioned in~\cite[Thm.~2.3]{CFMW26}). Their proofs rely on the so called \emph{Ehrhard symmetrization}~\cite{Ehr83}. Within our framework, Ehrhard's construction is exactly equivalent to the $(\mu,f_\theta)$-Talenti symmetrization on $X=\Rn$, provided $f_\theta$ is defined according to~\eqref{eq:def-f-halfspaces} for an arbitrary unit direction $\theta$. For completeness, we mention that in~\cite[Cor.~1.5]{CFLS-in-press} (for $p=2$) and~\cite[Cor.~1.4]{CQS26} (for general $p$), a Faber--Krahn inequality under a  \emph{mean width constraint} (rather than a measure constraint) has been proved in the Gaussian setting: balls centered at the origin are optimal.  While seemingly surprising, note that the mean width is measure-independent (and that half-spaces have infinite mean width!).

\subsubsection{Anisotropic Gaussian measures}
\label{sec:gauss-ani}

The anisotropic Gaussian space is a modification of the setting presented above. It still corresponds to choosing $X=\Rn$, but we now equip it with an anisotropic Gaussian measure, that is, in place of the usual Euclidean norm appearing in~\eqref{eq:std-gaussian}, we use an anisotropic one, namely,
\[
W(x)=-\frac{\langle Ax,x\rangle}{2},
\]
where $A$ is a symmetric and positive-definite matrix. Clearly, if $A=\mathrm{Id}_N$, we recover the isotropic setting discussed previously. This space can be interpreted as the Gaussian one where certain directions carry more weight than others; thus, one would expect isoperimetric sets to be half-spaces oriented along specific directions. This has been proved by Yeh~\cite[Thm.~1.1]{Yeh24}: the isoperimetric inequality~\eqref{eq:iso-half-spaces} holds, suitably choosing $\theta$ as any direction in the eigenspace corresponding to the smallest eigenvalue of $\sqrt{A}$.

Therefore, by employing the $(\mu,f_\theta)$-Talenti symmetrization on $X=\Rn$ with respect to the function $f_\theta$ from~\eqref{eq:def-f-halfspaces}, we immediately recover the Faber--Krahn and Saint-Venant inequalities.

\subsubsection{Concave \texorpdfstring{$1$}{1}d-perturbations of the Gaussian measure}
\label{sec:gauss-perturbed}

A log-con\-cave one-dimensional perturbation of the Gaussian measure on $\Rn$, up to an isometry, is a measure of type~\eqref{eq:measures} with potential
\[
W(x)=\varphi(\pi_N(x))-c\|x\|^2,
\]
where $\varphi$ is a smooth concave function, $\pi_N$ denotes the projection onto the vertical axis, and $c$ is a positive constant. Clearly, if $\varphi$ is constant, choosing $c=1/2$, we recover the standard Gaussian measure.

As established in~\cite[Thm.~5.10]{Ros14}, the nature of the isoperimetric sets depends on $\varphi$:
\begin{enumerate}[label=(\roman*)]
    \item if $\varphi$ is an affine function, then isoperimetric sets are half-spaces $H_r(\nu)$ oriented along any arbitrary direction $\nu$;
    \item if $\varphi$ is not affine, then isoperimetric sets are exclusively vertical half-spaces, meaning they are of the form $H_r(\theta)$ with $\langle \theta,e_N\rangle=0$.
\end{enumerate}
Consequently, by applying the $(\mu,f_\theta)$-Talenti symmetrization on the space $X=\Rn$ with an appropriately chosen direction $\theta$, we successfully recover the Faber--Krahn and Saint-Venant inequalities. We mention that one can drop the smoothness assumption on $\varphi$, with the caveat that then one needs to take as family of isoperimetric sets vertical half-spaces, \emph{i.e.}, as in point~(ii) above, see~\cite[Thm.~5.13]{Ros14}.

This result can be extended to the cases where the ambient space is a slab or a half-space of the form $X=\R^{N-1}\times (a,b)$, equipped with the same log-concave perturbed Gaussian measure. In this framework, provided $\varphi$ is concave (and possibly non-smooth) on $(a,b)$, the isoperimetric sets of any given volume are precisely the intersections of $X$ with vertical half-spaces $H_r(\theta)$ (see~\cite[Thm.~5.12]{Ros14}). Therefore, we can apply the theory developed above in the space $X=\R^{N-1}\times(a,b)$ with $f_\theta$ described in~\eqref{eq:def-f-halfspaces} to establish that the Faber--Krahn and Saint-Venant inequalities also holds in this restricted setting. For completeness, we point out to~\cite[Thm.~3.1]{BCM08}, where a P\'olya--Szeg\H{o} principle has been proved in this framework for the choices $X=\mathbb{R}^{N-1}\times (0,+\infty)$ and $\varphi(t)=k \log(t)$ with $k\ge 0$.

\section{Future directions}
\label{sec:future}

\subsection{Kohler-Jobin inequality}
\label{ssec:kj-ineq}
In the Euclidean setting of \cref{sec:euclidean}, P\'olya conjectured that the ball minimizes the first eigenvalue of the Dirichlet $p$-Laplacian among sets of given $p$-torsional rigidity, that is,
\begin{equation}
\label{eq:kj-ineq}
\lambda_{1,p} (B) \le \lambda_{1,p} (\Omega), \qquad \forall \Omega\subset \mathbb{R}^N \text{ with } T_p(\Omega)=T_p(B).
\end{equation}
This inequality is named after Kohler-Jobin who first proved it for $p=2$ in~\cite{KJ78,KJ82}, whereas for general $p$ was proved much later in~\cite{Bra14}. 
It can be thought of as a strenghtning of the Faber--Krahn inequality, as the measure constraint is weaker than the torsional one. Indeed, due to the rescaling properties of the first Dirichlet $p$-eigenvalue and of the $p$-torsional rigidity, one can relax the constraint in~\eqref{eq:kj-ineq} to $T_p(\Omega)\le T_p(B)$. Then, the Faber--Krahn easily follows from the Saint-Venant and Kohler-Jobin inequalities.

The proofs of~\eqref{eq:kj-ineq} given in~\cite{Bra14,KJ78,KJ82} exploit a sophisticated rearrangement, called \emph{Dirichlet transplantation}, which ports to the \emph{anisotropic} setting~\cite[Sect.~6]{Bra14} but not the \emph{weighted} one we are concerned with.

The only other setting, where the weighted analog of~\eqref{eq:kj-ineq} has been proved, is the standard Gaussian one of \cref{sec:gaussian}. This has been recently settled in~\cite{HL24} for $p=2$ with an argument substantially different from the Euclidean one. The inequality has then been extended to general $p$ in~\cite{CFMW26}, with a more general method---still tailored to the Gaussian space though---which lacks the ability to completely characterize cases of the equality.

The techniques used in the Euclidean and Gaussian settings are highly dependent on the underlying ambient and there is no clear all-rounder method. Nevertheless, it is natural to wonder whether one can still prove the weighted analog of~\eqref{eq:kj-ineq} in each of the settings discussed in \cref{sec:examples} with ad hoc methods. Of course, it would be desirable to come up with an abstract new route---\`a la \cref{thm:talenti}---which applies to all settings presented in \cref{sec:examples}.

\subsection{Boosted \texorpdfstring{P\'olya--Szeg\H{o}}{Polya--Szego} principle}
\label{ssec:boosted}
The abstract framework presented here naturally lends itself to stability questions. Suppose that, in place of the classical (relative) isoperimetric inequality required in \cref{thm:talenti}~\ref{item:isoperimetrica}, one has a quantitative version, \emph{i.e.},
\[
q(\mu(\Omega))\bigl(1+c(\mu(\Omega)) \alpha_\mu^\beta(\Omega)\bigr) \le \Per_\mu(\Omega; X),
\quad \text{for all Borel subsets } \Omega\subset X;
\]
where $\alpha_\mu(\,\cdot\,)$ is the $\mu$-Fraenkel asymmetry index (with respect to the measure $\mu$ and the corresponding isoperimetric sets), $\beta$ a positive constant, and $c$ a function, depending on, at most, $\mu(\Omega)$. Would one be able to prove a boosted P\'olya--Szeg\H{o} principle as in~\cite[Lem.~7.10]{BDP17}, which deals with the Euclidean case? Then, following ideas from Hansen--Nadirashvili~\cite{HN94}, one could derive a (non-sharp) quantitative Faber--Krahn inequality as in~\cite[Thm.~7.11]{BDP17}.

Apart from the above-mentioned Euclidean case, this route has been successfully followed in the standard Gaussian case~\cite{CCLMP24}, so that we expect it to be portable to this abstract setting. This would lead to proving a quantitative Faber--Krahn inequality in the setting covered in \cref{sec:CROS}, since a quantitative inequality is available, refer to~\cite{CFPROS22}.

Finally, it is worth noting that, in order to get the sharp quantitative inequality, in the Euclidean case one reduces first to prove a quantitative version of Saint-Venant inequality~\cite[Thm.~7.20]{BDP17}, then exploits the relevant Kohler-Jobin inequality, which we briefly touched upon in \cref{ssec:kj-ineq}.


\bibliographystyle{plainurl} 
\bibliography{fk-type-ineq-wittens} 

\end{document}